\newtheorem{prop}{Proposition}[section]
\newtheorem{lemma}[prop]{Lemma}
\theoremstyle{remark}
\theoremstyle{definition}
\newtheorem{definition}[prop]{Definition}
\title{Resolution of degenerate mirror families via toric morphisms}
\author{Karl Fredrickson}
\date{}
\begin{document}
\maketitle
\begin{abstract}  This paper continues the study of two examples of extremal transitions between families of Calabi-Yau threefolds.  In a previous paper we suggested that the ``mirror transition'' between mirror families predicted by Morrison could be achieved naturally by combining a toric morphism with the Batyrev-Borisov construction.  This was carried out for a particular example of a conifold transition. In this paper we show that similar methods work for another extremal transition involving more complicated singularities.  We also study how the resolution is related to geometry of the ambient toric varieties, and discuss the connection with recent work by Doran and Harder. \end{abstract}
\section{Introduction}
This paper is a continuation of a previous paper \cite{fred} which explored the general idea of studying transitions between Calabi-Yau threefolds by using toric morphisms and the Batyrev-Borisov construction.  The definition of a ``transition'' between two nonsingular Calabi-Yau threefolds $X$ and $Y$ (which we will always take to be projective varieties over $\mathbb{C}$) involves degenerating $X$ to a singular variety $X_0$, then obtaining $Y$ as a resolution of singularities of $X_0$.  One of the better-studied types of transitions is the class of {\it conifold transitions}, where $X_0$ is a variety with a finite number of ordinary double points (also called nodes) as singularities.  However, transitions where $X_0$ has singularities other than just isolated nodes can also be considered.  These are sometimes referred to as ``extremal transitions'' or ``geometric transitions''; for further background and complete definitions, see \cite{rossi, morrison}.  

The fundamental idea connecting transitions and mirror symmetry, first introduced in \cite{morrison}, is that if two Calabi-Yau manifolds are related by a transition, then their mirrors also should be.  One reason this idea is important is that it can be used to construct mirrors of threefolds for which no other mirror constructions are currently available (see \cite{BCKS, batkreuz}).

It is natural to ask about the relationship between Calabi-Yau transitions and the Batyrev-Borisov construction, which is the standard mirror construction for complete intersection Calabi-Yau manifolds in toric varieties.  The basic philosophy from \cite{fred} is that if a transition is induced by a toric morphism, which is just a linear map which behaves well with respect to fans, then the mirror transition should be induced by the dual linear map.  In \cite{fred} we showed that this works at the level of birational morphisms for two specific examples, and for one example showed that the morphism could be extended to a complete toric variety containing the entire family of smooth compactified CY varieties.  The main purpose of this paper is to carry out extension of the toric morphism for the other example, and also study how the toric morphism acts as a resolution of singularities of the degenerate mirror family.

The behavior of toric Calabi-Yau families under toric morphisms has also been studied in the papers \cite{k3} and \cite{toricmorph}.  The general theme is to study fibrations of Calabi-Yau varieties that can be realized as a toric morphism from the ambient toric variety onto a lower-dimensional toric variety.  Then all the methods of toric geometry can be used to study the fibration.  In our approach, the idea of using toric morphisms between the ambient toric varieties is similar, although the fact that we are dealing with birational morphisms of families, rather than fibrations, makes the behavior at the level of the Calabi-Yau varieties quite different. 

Recently, Doran and Harder in \cite{dh} described a general method for producing a mirror birational morphism from a toric degeneration.  The birational morphisms from \cite{fred} are actually specific cases of this construction.  We will explain the details in Section 4.

The organization of the paper is as follows.  In Section 2 we fix notation and recall relevant details from \cite{fred}.  In Section 3 we show that for another degenerate mirror family, it is possible to construct a toric morphism that acts as a resolution of singularities, similar to the main example from \cite{fred}.  Then we calculate the singular locus of the degenerate family and study how the resolution is related to toric geometry of the ambient toric varieties.  In Section 4 we explain connections to Doran and Harder's construction in \cite{dh}.

\section{Notation and geometric setup}
If $V$ is a real vector space and $S_1$, \dots, $S_n$ are subsets of $V$, then $$Conv(S_1, \dots, S_n)$$  is the convex hull of $S_1 \cup \dots \cup S_n$.  By ``cone over'' a subset $S \subseteq V$ we will always mean the set \[ \mathbb{R}_{\geq 0} S = \{ r s \ | \ r \in \mathbb{R}, r \geq 0, s \in S \}. \]

If $P \subseteq V$ is a compact convex polytope with the origin in its interior, then $\Sigma(P)$ will denote the complete fan consisting of cones over proper faces of $P$.  If $\Sigma$ is any fan then $X(\Sigma)$ will denote the toric variety associated to $\Sigma$.  We may sometimes also use $X(P)$ for the toric variety associated to $\Sigma(P)$.  The dual polytope $P^*$ is contained in the dual space $V^*$ and defined as $$P^* = \{ v \in V^* \ | \ \langle v, p \rangle \geq -1, \forall p \in P \},$$ where $\langle , \rangle$ is the dual pairing between $V$ and $V^*$.

In this paper, all piecewise linear functions on a real vector space, such as $\varphi : V \rightarrow \mathbb{R}$, will be lower convex, meaning that for any $u, v \in V$ and $0 \leq a, b \leq 1$ with $a+b = 1$, we have that $$\varphi(au+bv) \leq a \varphi(u) + b \varphi (v).$$
Given such a function we can define its Newton polytope
$$Newt(\varphi) = \{ u \in V^* \ | \ \langle u, v \rangle \geq -\varphi(v), \ \forall v \in V \}.$$ 
With our conventions, a strictly lower convex integral piecewise linear function $\varphi$ on a complete fan $\Sigma$ corresponds to an ample line bundle $\mathcal{L}$ on $X(\Sigma)$, and lattice points in $Newt(\varphi)$ correspond to monomial global sections of $\mathcal{L}$.

In \cite{fred} we constructed a birational morphism between two families of Calabi-Yau varieties, one of which was a singular family where the generic member had a singular locus consisting of four ordinary double points (nodes), and one of which had generically nonsingular members.  The families came from applying the Batyrev-Borisov construction (defined in \cite{BB}) and thus were embedded as complete intersections in toric varieties, and the morphism between families was induced by a toric morphism between ambient toric varieties.  Both of the families were (partial) resolutions of families in singular Gorenstein toric Fano varieties, arising from the fan of cones over a reflexive polytope.  The resolutions were obtained by a so-called MPCP resolution (as defined in \cite{Ba}) of the toric varieties corresponding to a maximal lattice subdivision of the reflexive polytopes.  

Let us review the relevant details from \cite{fred}.  For the rest of the paper, we fix $M' = \mathbb{Z}^5$, $M = \mathbb{Z}^4$, $M_\mathbb{R} = M \otimes \mathbb{R}$, and $M'_\mathbb{R} = M' \otimes \mathbb{R}$.  Also define the dual spaces $N = Hom(M, \mathbb{Z})$, $N' = Hom(M', \mathbb{Z})$, $N_\mathbb{R} = N \otimes \mathbb{R}$, and $N'_\mathbb{R} = N' \otimes \mathbb{R}$.

We have a smooth family $\mathcal{X}^*_{BB}$ which arises from applying the Batyrev-Borisov construction to the family of $(2,4)$ complete intersections in $\mathbb{P}^5$, and a family $\mathcal{X}^*_C$ which is a degenerate (singular) subfamily of the mirror to quartic hypersurfaces in the toric variety $P(2,4) \subseteq \mathbb{P}^5$.  (The equation for $P(2,4)$ is $z_2 z_3 = z_4 z_5$, where $z_0, \dots, z_5$ are homogeneous coordinates on $\mathbb{P}^5$.  Also, since there are many different possible choices of MPCP resolution of the ambient toric variety, the mirror family $\mathcal{X}^*_{BB}$ is not unique.  We will abuse notation by using $\mathcal{X}^*_{BB}$ to refer to families with different choices of resolution in different parts of the paper.)

The fact that a small resolution of $\mathcal{X}^*_C$ should yield the family $\mathcal{X}^*_{BB}$ was previously discussed in \cite{BCKS} (section 2.1), in the context of mirror symmetry for Calabi-Yau complete intersections in Grassmannians.  

The family $\mathcal{X}^*_{BB}$ lies in a MPCP resolution of a toric variety $X(\nabla)$ associated to the fan $\Sigma(\nabla)$ of cones over faces of a reflexive polytope $\nabla \subseteq N'_\mathbb{R}$.  Since $M'_\mathbb{R} \cong \mathbb{R}^5$ and $N'_\mathbb{R}$ is the dual space to $M'_\mathbb{R}$, we will use as a basis for $N'_\mathbb{R}$ the dual basis to the standard basis of $\mathbb{R}^5$.  Then the polytope $\nabla$ is equal to the convex hull $Conv(\nabla_1, \nabla_2)$, where $\nabla_1$ is the convex hull of the rows of the matrix 
\[ \left( \begin{array}{ccccc}
-1 & -1 & -1 & -1 & 0\\
3 & -1 & -1 & -1 & 0 \\
-1 & 3 & -1 & -1 & 0 \\
-1 & -1 & 3 & -1 & 0 \\ 
-1 & -1 & -1 & 3 & 0 \\
-1 & -1 & -1 & -1 & 4
\end{array} \right)\]
and $\nabla_2$ is the convex hull of the rows of 
\[ \left( \begin{array}{ccccc}
0 & 0 & 0 & 0 & -1 \\
2 & 0 & 0 & 0 & -1 \\
0 & 2 & 0 & 0 & -1 \\
0 & 0 & 2 & 0 & -1 \\ 
0 & 0 & 0 & 2 & -1 \\
0 & 0 & 0 & 0 & 1 
\end{array} \right).\]
The vertices of $\nabla$ consist of all rows of the above two matrices.  The family $\mathcal{X}^*_{BB}$ is defined on the open torus Spec $\mathbb{C}[M'] \subseteq X(\nabla)$ by \begin{align} \label{bbeqns}
-1 + Y_1 + Y_2 + Y_3 + b_4 Y_4 &= 0 \\
\label{bbeqns2} -1+Y_5 + (Y_1Y_2Y_3Y_4Y_5)^{-1} &= 0
\end{align}
where $Y_i = z^{e_i}$, and $e_1, \dots, e_5 \in M'_\mathbb{R}$ is the standard basis, and $b_4 \in \mathbb{C}$ is a generic coefficient.  The LHS of each equation may be regarded as a global section of a line bundle on $X(\nabla)$.  We define the piecewise linear functions $\varphi^i: N'_\mathbb{R} \rightarrow \mathbb{R}$ on $\Sigma(\nabla)$ for $i = 1,2$ by $\varphi^i = 1$ on all vertices of $\nabla_i$ and $\varphi^i = 0$ on all other vertices of $\nabla$.  Then the LHS of the first equation may be regarded as a global section of the line bundle $\mathcal{L}_1$ associated to $\varphi^1$, and the LHS of the second equation may be regarded as a global section of $\mathcal{L}_2$ associated to $\varphi^2$.  Taking the zero locus of these global sections will define a singular family in $X(\nabla)$.  

The other family under consideration, $\mathcal{X}^*_C$, is defined as follows. The polytope $\Delta^*_{P(2,4)} \subseteq N_\mathbb{R}$ is the convex hull of six vertices which are the rows of the matrix 
\[ \left( \begin{array}{cccc}
-1 & -1 & -1 & -1 \\
3 & -1 & -1 & -1 \\
-1 & 3 & -1 & -1 \\
-1 & -1 & 3 & -1 \\
3 & -1 & -1 & 3 \\
-1 & 3 & -1 & 3
\end{array} \right)\]
where we use the dual basis to $M_\mathbb{R} \cong \mathbb{R}^4$.  (This is the Newton polytope of quartics on $P(2,4)$.)  Let $X(\Delta^*_{P(2,4)})$ be the toric variety associated to the fan $\Sigma(\Delta^*_{P(2,4)})$ of cones over the faces of $\Delta^*_{P(2,4)}$.  The anticanonical bundle on $X(\Delta^*_{P(2,4)})$ has a basis of global sections consisting of lattice points contained in the dual polytope, $\Delta_{P(2,4)} \subseteq M_\mathbb{R}$, and this polytope is the convex hull of the rows of the matrix 
\[ \left( \begin{array}{cccc}
1 & 0 & 0 & 0 \\
0 & 1 & 0 & 0 \\
0 & 0 & 1 & 0 \\
0 & 0 & 0 & 1 \\
-1 & -1 & -1 & 0 \\
1 & 1 & 0 & -1
\end{array} \right).\]
Thus, the equation 
\begin{equation*} \label{cfoldfamily}
-1+X_1+X_2+X_3+X_4+a_5(X_1X_2X_3)^{-1}+a_6X_1X_2X^{-1}_4= 0
\end{equation*}
with $X_i = z^{e_i}$ where $e_1, \dots, e_4 \in M_\mathbb{R}$ is the standard basis and $a_5$ and $a_6$ are generic coefficients, defines a family of singular Calabi-Yau hypersurfaces in $X(\Delta^*_{P(2,4)})$.  After an MPCP resolution of $X(\Delta^*_{P(2,4)})$ to a new toric variety $\widehat{X} (\Delta^*_{P(2,4)})$ the family is generically nonsingular.  We define $\mathcal{X}^*_C \subseteq \widehat{X} (\Delta^*_{P(2,4)})$ as the degenerate subfamily with $a_6 = 1$.  Generic members of this family have a singular locus with four ordinary double points (see \cite{BCKS}, section 2.1).

In \cite{fred} we showed that with properly chosen MPCP resolutions, there is a toric morphism from $\widehat{X}(\nabla)$ to $\widehat{X}(\Delta^*_{P(2,4)})$.  This morphism is given by an integral linear map $g^* : N'_\mathbb{R} \rightarrow N_\mathbb{R}$ which satisfies the nice property that $g^*(\nabla) = \Delta^*_{P(2,4)}$.  Furthermore, after setting the complex structure parameters $b_4$ and $a_5$ equal, this toric morphism induces a regular birational map between the families $\mathcal{X}^*_{BB}$ and $\mathcal{X}^*_C$.

The results from \cite{fred} established that $g^*$ is a resolution of singularities, meaning that for a generic member $Z_C \in \mathcal{X}^*_C$, and its corresponding member $Z_{BB} \in \mathcal{X}^*_{BB}$, the map $g^* : Z_{BB} \rightarrow Z_C$ is an isomorphism away from the nodes of $Z_C$.  A ``small''  resolution means that additionally, the fiber of $g^*$ over each node is of dimension 1, which for ordinary double points implies that the fibers must be copies of $\mathbb{P}^1$.  The fact that $g^*$ must be a small resolution follows from general theory and the fact that $Z_{BB}$ is Calabi-Yau.  

\section{Mirror family to hypersurfaces in $\mathbb{P}^{(1,1,2,2,2)}$}

A similar procedure of using toric morphisms to resolve a degenerate mirror family can be used for the mirror to another Calabi-Yau family, the family of quartic hypersurfaces in the weighted projective space $\mathbb{P}^{(1,1,2,2,2)}$.  This weighted projective space has a fan consisting of cones over faces of the reflexive polytope $\Delta_{WP} \subseteq M_\mathbb{R}$ with vertices $(-1,-2,-2,-2)$ and $f_1$, $f_2$, $f_3$, $f_4$ (the standard basis).  The vertices of the dual polytope $\Delta^*_{WP} \subseteq N_\mathbb{R}$ are the rows of 
\[ \left( \begin{array}{cccc}
-1 & -1 & -1 & -1 \\
7 & -1 & -1 & -1 \\
-1 & 3 & -1 & -1 \\
-1 & -1 & 3 & -1 \\ 
-1 & -1 & -1 & 3 \\
\end{array} \right).\]
Just like the toric variety $P(2,4)$, $\mathbb{P}^{(1,1,2,2,2)}$ can be embedded in $\mathbb{P}^5$ as a quadratic hypersurface, for instance, via the equation $z_0z_1 = z^2_2$ where $z_0$, $z_1$, $\dots$, $z_5$ are homogeneous coordinates on $\mathbb{P}^5$.  The   
singular locus is the plane of $A_1$ singularities where $z_0 = z_1 = z_2 = 0$.  Intersecting with a generic quartic hypersurface in $\mathbb{P}^5$ gives a variety with singular locus consisting of four lines of $A_1$ singularities. After a crepant toric resolution of $\mathbb{P}^{(1,1,2,2,2)}$, this variety is a smooth Calabi-Yau threefold.  We refer to this family of Calabi-Yau threefolds as $\mathcal{X}_{WP}$. 

By Batyrev's construction, the mirror family $\mathcal{X}^*_{WP}$ is given by (a Calabi-Yau compactification of)
$$-1+X_1+X_2 + X_3+X_4 + a_5 X^{-1}_1 (X_2 X_3 X_4)^{-2}+a_6 (X_2 X_3 X_4)^{-1} = 0$$
in $T= \hbox{Spec} \ \mathbb{C}[M]$, where $X_i = z^{f_i}$.  There is a degenerate subfamily satisfying $4a_5 = a^2_6$, which we will refer to as $\mathcal{X}^*_S$.  After factoring, the defining equation becomes:
\begin{equation}
\label{factoredform}
-1+X_1(1+(a_6/2)(X_1X_2X_3X_4)^{-1})^2+X_2+X_3+X_4 = 0.
\end{equation}
This mirror family and its degenerate subfamily were discussed by Morrison in section 3.3 of \cite{morrison}, where the degenerate subfamily is given by the condition $q_2 = 4$.  Morrison also defines a birational morphism between the degenerate subfamily and $\mathcal{X}^*_{BB}$.

In \cite{fred}, we showed that the degenerate subfamily is birational to the family $\mathcal{X}^*_{BB}$ via a toric morphism, in an entirely similar fashion to the $P(2,4)$ case.  In this case, it is more convenient to use a different complex structure parameter $b_6$ on $\mathcal{X}^*_{BB}$, so that the defining equations for $\mathcal{X}^*_{BB}$ are:
\begin{align*}
-1 + Y_1 + Y_2 + Y_3 + Y_4 &= 0 \\
-1+Y_5 + b_6 (Y_1Y_2Y_3Y_4Y_5)^{-1} &= 0
\end{align*}
To match the complex structure parameters in the two families we must set $b_6 = a_6/2$.  (Thus, from now on we will also use the parameter $b_6$ for the family $\mathcal{X}^*_S$, with the understanding that $a_6/2$ is replaced with $b_6$ in Equation \ref{factoredform}.)  The morphism is given by the linear map $h^* : N'_\mathbb{R} \rightarrow N_\mathbb{R}$ with matrix 
\[ \left( \begin{array}{ccccc}

0 & 0 & 0 & 1 & 2 \\
0 & 0 & 1 & 0 & 0 \\
0 & 1 & 0 & 0 & 0 \\
1 & 0 & 0 & 0 & 0 
\end{array} \right) \]
in the standard bases.

\subsection{Existence of the toric morphism}

For the $P(2,4)$ case, \cite{fred} also showed that the toric morphism could be extended to a regular morphism from the entire Calabi-Yau family $\mathcal{X}^*_{BB}$, rather than just a dense open subset.  However, this was not carried out fully for the weighted projective space case.  To discuss resolution of singularities of the degenerate family $\mathcal{X}^*_S$, we need to extend the toric morphism, because the singularities of $\mathcal{X}^*_S$ are not contained in the open torus.  

In the $P(2,4)$ case, the behavior of the toric morphism and polytopes on the mirror side was very nice.  In particular, we had that $g^* : N'_\mathbb{R} \rightarrow N_\mathbb{R}$ satisfies $g^*(\nabla) = \Delta^*_{P(2,4)}$.  After choosing appropriate MPCP resolutions $\widehat{X}(\nabla)$ and $\widehat{X}(\Delta^*_{P(2,4)})$ of $X(\nabla)$ and $X(\Delta^*_{P(2,4)})$, the toric morphism $g^* : \widehat{X}(\nabla) \rightarrow \widehat{X}(\Delta^*_{P(2,4)})$ exists, meaning each cone $C \in \widehat{\Sigma}(\nabla)$ is such that $g^*(C) \subseteq C'$ for some $C' \in \widehat{\Sigma}(\Delta^*_{P(2,4)})$.

As discussed in Section 6 of \cite{fred}, the picture for the $\mathbb{P}^{(1,1,2,2,2)}$ case is not quite as nice.  The linear map $h^* : N'_\mathbb{R} \rightarrow N_\mathbb{R}$ does not satisfy $h^*(\nabla) = \Delta^*_{WP}$.  Instead we have that $\Delta^*_{WP}$ is a proper subset of $h^*(\nabla)$.  Also, the primitive elements of $N'$ lying in the kernel of $h^*$ are $(0,0,0,2,-1)$ and $(0,0,0,-2,1)$.  Because $(0,0,0,-2,1)\not \in \nabla$, we cannot hope to have a toric morphism $h^*: \widehat{X}(\nabla) \rightarrow \widehat{X}(\Delta^*_{WP})$ between MPCP resolutions of $X(\nabla)$ and $X(\Delta^*_{WP})$.  (It is worth noting that we have some amount of freedom in choosing the nef partition of $\mathbb{P}^5$, which amounts to a choice of properly normalized piecewise linear functions $\varphi_1$ and $\varphi_2$ on the fan for $\mathbb{P}^5$, representing the line bundles $\mathcal{O}_{\mathbb{P}^5}(4)$ and $\mathcal{O}_{\mathbb{P}^5}(2)$ respectively.  Different piecewise linear functions will affect the orientation of the polytope $\nabla = Conv(Newt(\varphi_1),Newt(\varphi_2))$ in $N'_\mathbb{R}$, but it is possible to show that the same problems with $h^*$ arise regardless of the choice of the $\varphi_i$.)

The construction of a toric morphism that resolves $\mathcal{X}^*_{S}$ to a nonsingular Batyrev-Borisov mirror $\mathcal{X}^*_{BB}$ requires a bit more analysis because of these problems, but a similar strategy still works.  We will show that it is possible to delete some of the cones of $\Sigma(\nabla)$, obtaining a subfan $\Sigma'(\nabla)$ such that $X(\Sigma'(\nabla))$ still contains all generic members of the unresolved family in $X(\nabla)$ defined by $s_1 = s_2 = 0$, where $s_i$ is a generic global section of $\mathcal{L}_i$.  (This the family that can be resolved to $\mathcal{X}^*_{BB}$ by choosing an MPCP resolution of $X(\nabla)$.)  Then, we will show that there is a crepant toric resolution of $X(\Sigma'(\nabla))$ to a projective toric variety $\widehat{X}(\Sigma'(\nabla))$, so that $\mathcal{X}^*_{BB} \subseteq \widehat{X}(\Sigma'(\nabla))$ is a family of smooth compact CY varieties, and there is an MPCP resolution $\widehat{X}(\Delta^*_{WP})$ such that the toric morphism $h^* : \widehat{X}(\Sigma'(\nabla)) \rightarrow \widehat{X}(\Delta^*_{WP})$ exists.  This morphism acts as a resolution of singularities of $\mathcal{X}^*_{S}$.

The fan $\Sigma'(\nabla)$ is defined as follows.  
\begin{definition} \label{sigmaprime} Consider the set of faces $f$ of $\nabla$ satisfying all of the following conditions:

1. $f$ does not contain $(0,0,0,0,-1)$.

2. $f$ does not contain $(0,0,0,0,1)$.

3. $f$ does not contain the line segment \[Conv((-1,-1,-1,-1,0),(-1,-1,-1,-1,4)).\]
Define $\Sigma'(\nabla)$ as the subfan of $\Sigma(\nabla)$ consisting of cones over all such faces.  
\end{definition}

Because $\Sigma'(\nabla)$ is a subfan of $\Sigma(\nabla)$, $X(\Sigma'(\nabla))$ is naturally included into $X(\nabla)$ as an open set.

\begin{prop} Let $V \subseteq X(\nabla)$ be a subvariety of $X(\nabla)$ defined by $s_1 = s_2 = 0$ where $s_i$ is a generic global section of $\mathcal{L}_i$.  Then $V \subseteq X(\Sigma'(\nabla))$.  \end{prop}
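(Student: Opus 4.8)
The plan is to describe the closed set $X(\nabla)\setminus X(\Sigma'(\nabla))$ explicitly as a union of torus‑orbit closures and then to show that the single section $s_2$ has no zeros on any of them.

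\emph{Step 1: the complement.} Each of the three ``forbidden'' configurations in Definition~\ref{sigmaprime} is itself a face of $\nabla$: the points $(0,0,0,0,1)$ and $(0,0,0,0,-1)$ are vertices, and $Conv((-1,-1,-1,-1,0),(-1,-1,-1,-1,4))$ is an edge — the functional $(1,1,1,1,0)$ takes the value $-4$ at its two endpoints and a strictly larger value at every other vertex of $\nabla$. Let $\rho_+,\rho_-$ be the rays over $(0,0,0,0,1),(0,0,0,0,-1)$ and $\tau$ the two‑dimensional cone over that edge, and write $Z_{\rho_+},Z_{\rho_-},Z_{\tau}$ for the corresponding orbit closures in $X(\nabla)$. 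A cone of $\Sigma(\nabla)$ lies outside $\Sigma'(\nabla)$ precisely when the corresponding face contains one of these three faces, i.e.\ precisely when the cone contains $\rho_+$, $\rho_-$, or $\tau$; hence $X(\nabla)\setminus X(\Sigma'(\nabla)) = Z_{\rho_+}\cup Z_{\rho_-}\cup Z_{\tau}$. It therefore suffices to prove $V\cap Z_{\rho_+}=V\cap Z_{\rho_-}=V\cap Z_{\tau}=\emptyset$.

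\emph{Step 2: $s_2$ is nowhere zero on these strata.} Computing $Newt(\varphi^2)$ (equivalently, reading it off from the second defining equation of $\mathcal{X}^*_{BB}$) gives $\Delta_2:=Newt(\varphi^2)=Conv(0,\ e_5,\ -(e_1+e_2+e_3+e_4+e_5))$, a triangle whose only lattice points are its three vertices. Hence $\varphi^2$ is the support function of $-\Delta_2$, and it has exactly three maximal domains of linearity: the normal cones of the vertices $0$, $-e_5$, $e_1+\dots+e_5$ of $-\Delta_2$. A short inequality check places $\tau$ in the interior of the normal cone of $0$, the ray $\rho_-$ in the interior of the normal cone of $-e_5$, and the ray $\rho_+$ in the interior of the normal cone of $e_1+\dots+e_5$. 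Since $\Sigma(\nabla)$ refines the normal fan of $-\Delta_2$ — this is just the assertion that $\varphi^2$ is piecewise linear on $\Sigma(\nabla)$ — it follows that $\varphi^2$ is in fact \emph{linear} on the entire star of each of $\rho_+$, $\rho_-$, $\tau$. Consequently $\mathcal{L}_2$ restricts to the trivial line bundle on each of $Z_{\rho_+}$, $Z_{\rho_-}$, $Z_{\tau}$, and the unique lattice point of $\Delta_2$ selected by each cone — respectively $e_5$, $-(e_1+\dots+e_5)$, and $0$ — supplies a nowhere‑vanishing monomial trivialization. Because the coefficients of $s_2$ are generic, $s_2$ restricts to a nonzero constant multiple of this trivialization on each of the three orbit closures, so it is nowhere vanishing there. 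Hence $V=\{s_1=s_2=0\}$ meets none of $Z_{\rho_+}$, $Z_{\rho_-}$, $Z_{\tau}$, which gives $V\subseteq X(\Sigma'(\nabla))$.

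\emph{Where the work is.} The point that needs real care is the claim that $\rho_+$, $\rho_-$, $\tau$ each sit in the \emph{interior} of one domain of linearity of $\varphi^2$: one must make sure none of them straddles a wall of the normal fan of $\Delta_2$, since otherwise $\varphi^2$ would only be piecewise linear on the relevant star and the restricted bundle need not be trivial. This is an explicit finite check once $\Delta_2$ is computed, but it is essential that it is $\varphi^2$, and not $\varphi^1$, that has this property — the argument fails for $\varphi^1$ at $\tau$, since $Newt(\varphi^1)$ is the larger simplex $Conv(0,e_1,e_2,e_3,e_4)$ and the face it selects along $\tau$ still has four lattice points, so $s_1$ restricts to a non‑monomial section there. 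One should likewise confirm at the start of Step 1 that $Conv((-1,-1,-1,-1,0),(-1,-1,-1,-1,4))$ really is a face of $\nabla$.
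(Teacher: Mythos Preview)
Your argument is correct and is essentially the paper's proof, recast in the language of normal fans: the paper checks directly that on every excluded orbit exactly one monomial section of $\mathcal{L}_2$ survives, while you obtain the same conclusion by observing that $\varphi^2$ is linear on the stars of $\rho_+$, $\rho_-$, $\tau$, so $\mathcal{L}_2$ trivializes and the generic $s_2$ restricts to a nonzero constant. Two small slips to fix: the cone $\tau$ is not literally in the \emph{interior} of the normal cone of $0$ (its two endpoints lie on the boundary walls $n_5=0$ and $n_1+\dots+n_5=0$), only its relative interior is, which is what your star argument actually needs; and your ``respectively $e_5$, $-(e_1+\dots+e_5)$, $0$'' has the first two labels swapped --- $\rho_+$ selects $-(e_1+\dots+e_5)$ and $\rho_-$ selects $e_5$, matching the paper's direct check.
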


\begin{proof} Let $\Delta_i$ be the Newton polytope of the function $\varphi^i$ which corresponds to the line bundle $\mathcal{L}_i$ on $X(\nabla)$.  Then monomial global sections of $\mathcal{L}_i$ correspond to lattice points in $\Delta_i$.  If $i=1$ then we calculate that the lattice points in $\Delta_1$ are $e_1, e_2, e_3, e_4$ and $0$, while the lattice points in $\Delta_2$ are $e_5, -e_1-e_2-e_3-e_4-e_5$ and $0$.  (Here $e_1, \dots, e_5$ is the standard basis of $M'_\mathbb{R} \cong \mathbb{R}^5$.) 

Suppose that $m \in \Delta_i$ is such that the monomial global section $z^m \in \Gamma(X(\nabla),\mathcal{L}_i)$ is nowhere vanishing on some torus orbit $T \subseteq X(\nabla)$, but all other lattice points $m' \in \Delta_i$ are such that $z^{m'}$ is identically zero on $T$.  Then it is clear that a generic section $s \in \Gamma(X(\nabla),\mathcal{L}_i)$ will not vanish anywhere on $T$ and thus $V$ will not contain any points in $T$.

Straightforward computation verifies that if $f$ is a face such that the cone over $f$ is not in $\Sigma'(\nabla)$ (i.e. $f$ does not satisfy all of the conditions in Definition \ref{sigmaprime}) then the associated torus orbit $T(f)$ will have this property for the monomial global sections of $\mathcal{L}_2$.  In particular, suppose $f$ contains $(0,0,0,0,1)$ and let $T(f)$ be the torus orbit corresponding to $f$.  Then one can check that $z^{-e_1-e_2-e_3-e_4-e_5}$, as a global section of $\mathcal{L}_2$, is nowhere vanishing on $T(f)$, but the other global sections $z^{0}$ and $z^{e_5}$ vanish everywhere on $T(f)$.  Similarly, if $f$ contains $(0,0,0,0,-1)$, then the global section $z^{e_5}$ of $\mathcal{L}_2$ is nowhere vanishing on $T(f)$ while $z^{-e_1-e_2-e_3-e_4-e_5}$ and $z^0$ vanish everywhere on $T(f)$.  If $f$ contains $Conv((-1,-1,-1,-1,0),(-1,-1,-1,-1,4))$, then $z^0$ is nowhere vanishing on $T(f)$ while $z^{-e_1-e_2-e_3-e_4-e_5}$ and $z^{e_5}$ vanish everywhere on $T(f)$.
\end{proof}

The next step in the construction is to show that there are MPCP resolutions of $X(\Delta^*_{WP})$ which are well-behaved, in a sense to be defined, with respect to the fan $\Sigma'(\nabla)$ and the map $h^*$.  As an intermediate step, we will define a fan $\Sigma'(\Delta^*_{WP})$, which is a certain partial crepant subdivision of $\Sigma(\Delta^*_{WP})$.  Any refinement of $\Sigma'(\Delta^*_{WP})$ to an MPCP subdivision will have the needed properties.  (A partial crepant projective subdivision of $\Sigma(\Delta^*_{WP})$, by definition, is subdivision of $\Sigma(\Delta^*_{WP})$ which is projective and all of whose rays are rays over lattice points in $\Delta^*_{WP}$.)

\begin{prop} \label{goodsubdiv} There exists a partial crepant projective subdivision $\Sigma'(\Delta^*_{WP})$ of $\Sigma(\Delta^*_{WP})$ such that for every cone $C \in \Sigma'(\nabla)$, $h^*(C)$ is a union of cones in $\Sigma'(\Delta^*_{WP})$. \end{prop}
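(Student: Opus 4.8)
The plan is to build $\Sigma'(\Delta^*_{WP})$ explicitly by pulling back, through $h^*$, enough structure from $\Sigma'(\nabla)$ to guarantee that the images $h^*(C)$ are unions of cones. First I would observe that since $h^*$ is a surjective integral linear map and $\ker h^*$ is the line spanned by $(0,0,0,2,-1)$, the image $h^*(C)$ of any cone $C \in \Sigma'(\nabla)$ is a rational polyhedral cone in $N_\mathbb{R}$; what must be arranged is that its facets are cut out by hyperplanes that are also facet hyperplanes of cones in the target fan. I would begin by computing the images under $h^*$ of the primitive generators of all rays of $\Sigma(\nabla)$ — i.e. the images of the vertices of $\nabla$ listed in the two defining matrices — and discarding the vertices $(0,0,0,0,1)$, $(0,0,0,0,-1)$ and the ray generators lying on the segment $\mathrm{Conv}((-1,-1,-1,-1,0),(-1,-1,-1,-1,4))$, since by Definition \ref{sigmaprime} no cone of $\Sigma'(\nabla)$ uses those. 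The surviving images form a finite subset of lattice points of $\Delta^*_{WP}$ (this uses $h^*(\nabla) \supseteq \Delta^*_{WP}$ together with a check that the relevant vertices map into $\Delta^*_{WP}$, not just into the larger polytope), and these will be among the rays of $\Sigma'(\Delta^*_{WP})$.

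Next I would take $\Sigma'(\Delta^*_{WP})$ to be the coarsest common refinement of $\Sigma(\Delta^*_{WP})$ and the fan generated by all cones of the form $h^*(C)$ for $C \in \Sigma'(\nabla)$, and then argue this refinement is (a) a fan, (b) projective, and (c) crepant with rays over lattice points of $\Delta^*_{WP}$. For (a) and (c): the cones $h^*(C)$ are cones over faces of $h^*(\nabla)$ intersected appropriately, and because $\nabla$ and $\Delta^*_{WP}$ are reflexive with $\Delta^*_{WP} \subseteq h^*(\nabla)$, the images $h^*(C)$ have primitive ray generators that are lattice points of $\Delta^*_{WP}$; a partial crepant subdivision is exactly one with this property, and crepancy (the piecewise-linear anticanonical function stays linear on new cones) follows because all new rays lie on the boundary $\partial \Delta^*_{WP}$. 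For (b), projectivity, I would produce an explicit strictly convex support function: the pushforward under $h^*$ of a strictly convex support function for $\Sigma'(\nabla)$ (which exists because $\Sigma'(\nabla)$ is a subfan of the projective fan $\Sigma(\nabla)$) gives a convex function on $|h^*(\Sigma'(\nabla))|$, and combining it with a strictly convex function on $\Sigma(\Delta^*_{WP})$ via a large multiple yields strict convexity on the common refinement — this is the standard "join of polytopes" argument.

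Finally, with $\Sigma'(\Delta^*_{WP})$ so defined, the conclusion that $h^*(C)$ is a union of cones of $\Sigma'(\Delta^*_{WP})$ for each $C \in \Sigma'(\nabla)$ is almost immediate: by construction every $h^*(C)$ appears as a cone (hence trivially a union of cones) in the fan we refined, and passing to the common refinement only subdivides it further, so $h^*(C)$ remains a union of cones of the refinement. I expect the main obstacle to be step (a)–(c): namely verifying that the raw collection $\{h^*(C) : C \in \Sigma'(\nabla)\}$ together with $\Sigma(\Delta^*_{WP})$ actually admits a common refinement that is simultaneously a genuine fan (no overlapping cones with incompatible intersections), projective, and has all rays over $\Delta^*_{WP}$ — in particular, ruling out that $h^*$ drags some ray generator strictly outside $\Delta^*_{WP}$ (which would break crepancy) requires the careful use of the hypotheses of Definition \ref{sigmaprime}, since deleting exactly the three offending types of faces is what keeps the images inside $\Delta^*_{WP}$. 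The bookkeeping is finite but is the real content; everything else is standard toric machinery.
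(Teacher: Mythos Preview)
The paper's proof is purely computational: it writes down $\Sigma'(\Delta^*_{WP})$ explicitly (the ray generators and maximal cones are tabulated), exhibits projectivity by perturbing the support function $\varphi_{\Delta^*_{WP}}$ at the four extra lattice points $(-1,1,0,0)$, $(-1,0,1,0)$, $(-1,0,0,1)$, $(3,-1,-1,-1)$, and then verifies with a Macaulay2 script that each $h^*(C)$ is a union of cones of $\Sigma'(\Delta^*_{WP})$. No structural argument is offered for why this particular subdivision works; the proposition is established by exhibiting and checking the answer.

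Your proposal attempts a conceptual construction via common refinement, but the steps you label ``bookkeeping'' are the entire content, and several of your intermediate claims are unjustified. First, the collection $\{h^*(C) : C \in \Sigma'(\nabla)\}$ need not form a fan---since $h^*$ has nontrivial kernel, images of distinct cones can overlap without meeting in a common face---so ``the fan generated by'' these images is not a well-defined object, and producing one is essentially equivalent to the proposition itself. Second, even granting such a fan, taking its common refinement with $\Sigma(\Delta^*_{WP})$ can create new rays where facet hyperplanes intersect; nothing forces these to pass through lattice points of $\partial\Delta^*_{WP}$, so crepancy does not follow merely from checking where vertices of $\nabla$ land. Third, ``pushforward of a strictly convex support function'' along a non-injective linear map is not a well-defined piecewise linear function, so your projectivity argument is incomplete. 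Finally, you misread Definition~\ref{sigmaprime}: condition 3 excludes only faces containing the \emph{entire} segment $Conv((-1,-1,-1,-1,0),(-1,-1,-1,-1,4))$, so both endpoints do occur individually as ray generators of cones in $\Sigma'(\nabla)$ and cannot be discarded.
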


\begin{proof} Data for the fan $\Sigma'(\Delta^*_{WP})$ is given in Table \ref{sigma1}.  Note that in addition to the vertices of $\Delta^*_{WP}$, $\Sigma'(\Delta^*_{WP})$ contains rays over the lattice points $(-2,0,0,2)$, $(-2,0,2,0)$, $(-2,2,0,0)$, and $(3,-1,-1,-1)$.  These are respectively the images of $(2,0,0,0,-1)$, $(0,2,0,0,-1)$, $(0,0,2,0,-1)$, and $(-1,-1,-1,3,0)$, which are vertices of $\nabla$, under $h^*$.

$\Sigma'(\Delta^*_{WP})$ was constructed by starting with the piecewise linear support function $\varphi_{\Delta^*_{WP}} : N'_\mathbb{R} \rightarrow \mathbb{R}$ which is identically equal to 1 on $\partial \Delta^*_{WP}$.  We then subtract small positive values from $\varphi_{\Delta^*_{WP}}$ at the points $(-2,0,0,2)$, $\dots$, $(3,-1,-1,-1)$ to obtain a new convex piecewise linear function $\varphi'$.  This function is strictly convex on the fan $\Sigma'(\Delta^*_{WP})$.

In order to check that $\Sigma'(\Delta^*_{WP})$ has the claimed property, we use the same approach as in \cite{fred}.  Using a script written for the computer algebra program Macaulay2 \cite{M2} and its Polyhedra package \cite{birkner}, it is possible to calculate the image of each cone in $\Sigma'(\nabla)$ under $h^*$ and show that it is a union of cones in $\Sigma'(\Delta^*_{WP})$.  The Macaulay2 code is available at \url{http://math.ucr.edu/~karl/M2code.html}.
\end{proof} 

\begin{table}
\begin{tabular}{ |l |m{9cm} | }
  \hline
 \pbox{20cm}{Generators of rays \\ of $\Sigma'(\Delta^*_{WP})$} & {\begin{gather*} v_1 = (-1,-1,-1,-1), \ v_2 = (7,-1,-1,-1), \\ 
 v_3 = (3,-1,-1,-1), \ v_4 = (-1,3,-1,-1), \\
v_5 = (-1,-1,3,-1), \ v_6 = (-1,-1,-1,3), \\ 
v_7 = (-1,1,0,0), \ v_8 = (-1,0,1,0), \\
v_9 = (-1,0,0,1) \end{gather*}} \\
  \hline
  \pbox{20cm}{Generators of maximal cones \\ of $\Sigma'(\Delta^*_{WP})$} & 
   \begin{gather*} \{ v_1, v_7, v_8, v_9 \}, \ \{ v_2,v_7,v_8,v_9 \}, \\
\{ v_1,v_3,v_5,v_6 \}, \ \{ v_1,v_3,v_4,v_6 \}, \\
\{ v_1,v_3,v_4,v_5 \}, \ \{ v_2,v_3,v_4,v_5 \}, \\
\{ v_2,v_3,v_4,v_6 \}, \ \{ v_2,v_3,v_5,v_6 \}, \\
\{ v_1,v_5,v_6,v_8,v_9 \}, \ \{ v_1,v_4,v_6,v_7,v_9 \}, \\
\{ v_1,v_4,v_5,v_7,v_8 \}, \ \{ v_2,v_5,v_6,v_8,v_9 \}, \\
\{ v_2,v_4,v_6,v_7,v_9 \}, \ \{ v_2,v_4,v_5,v_7,v_8 \}
   \end{gather*} \\
  \hline
\end{tabular}
\caption{Data for fan $\Sigma'(\Delta^*_{WP})$ in $N_\mathbb{R}$}
\label{sigma1}
\end{table}

\begin{lemma} \label{lemma:crepant} Let $\ell \in \Delta^*_{WP}$ be a nonzero lattice point, and let $r = \mathbb{R}_{\geq 0} \ell$ be the ray over $\ell$.  Then all the rays of the intersection fan $(h^*)^{-1}(r) \cap \Sigma'(\nabla)$ are rays over lattice points in $\nabla$.
\end{lemma}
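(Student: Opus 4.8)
The plan is to list the rays of the two-dimensional fan $(h^*)^{-1}(r)\cap\Sigma'(\nabla)$ according to how they behave under $h^*$. First I would record that $\ker h^*$ is the line spanned by $k:=(0,0,0,2,-1)$ and that $h^*$ is surjective on lattices, so that for any integral preimage $\tilde\ell$ of $\ell$ (for instance $\tilde\ell=(\ell_4,\ell_3,\ell_2,\ell_1,0)$ when $\ell=(\ell_1,\ell_2,\ell_3,\ell_4)$) the cone $(h^*)^{-1}(r)$ equals the two-dimensional cone $\mathbb{R}_{\geq 0}\tilde\ell+\mathbb{R}k$. Every ray $\rho$ of the intersection fan has the form $\rho=(h^*)^{-1}(r)\cap C$ for some $C\in\Sigma'(\nabla)$, and $h^*(\rho)$ is either $\{0\}$ or all of $r$. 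The rays with $h^*(\rho)=\{0\}$ lie in $\mathbb{R}k$, hence are among $\mathbb{R}_{\geq 0}k$ and $\mathbb{R}_{\geq 0}(-k)$; the rays with $h^*(\rho)=r$ split into those that are themselves rays of $\Sigma'(\nabla)$ and those cut out by the two-plane spanned by $(h^*)^{-1}(r)$ meeting a higher-dimensional cone transversally.

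Three of these cases are immediate or conceptual. Since $k=(0,0,0,2,-1)$ is a vertex of $\nabla$ and the single vertex $\{k\}$ satisfies the three conditions of Definition~\ref{sigmaprime}, the ray $\mathbb{R}_{\geq 0}k$ is a ray of $\Sigma'(\nabla)$ over a lattice point of $\nabla$; and any ray $\rho$ with $h^*(\rho)=r$ that is a ray of $\Sigma'(\nabla)$ is a ray of $\Sigma(\nabla)$, hence lies over a vertex of $\nabla$. The key conceptual point is the exclusion of $\mathbb{R}_{\geq 0}(-k)$: one checks that the half-line $\{(0,0,0,-2t,t):t\geq 0\}$ leaves $\nabla$ at $t=\tfrac12$ and that the exit point $(0,0,0,-1,\tfrac12)$ lies in the relative interior of the facet $F$ of $\nabla$ defined by $x_4\geq-1$, whose vertex set is $\{(-1,-1,-1,-1,0),(3,-1,-1,-1,0),(-1,3,-1,-1,0),(-1,-1,3,-1,0),(-1,-1,-1,-1,4)\}$. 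Because $F$ contains the segment $Conv((-1,-1,-1,-1,0),(-1,-1,-1,-1,4))$ it violates condition~3, and since the minimal cone of $\Sigma(\nabla)$ meeting the open ray $\mathbb{R}_{>0}(-k)$ is the cone over $F$, this open ray is disjoint from $|\Sigma'(\nabla)|$. Thus $\mathbb{R}_{\geq 0}(-k)$ does not occur in $(h^*)^{-1}(r)\cap\Sigma'(\nabla)$ — which is exactly why the problematic kernel direction $(0,0,0,-2,1)\notin\nabla$ is removed by passing from $\Sigma(\nabla)$ to $\Sigma'(\nabla)$.

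What remains is to show that the rays $\rho=(h^*)^{-1}(r)\cap C$, for cones $C\in\Sigma'(\nabla)$ of dimension $\geq 2$ met by the two-plane in a single ray mapping onto $r$, also lie over lattice points of $\nabla$. I would handle this as in \cite{fred}: since $(h^*)^{-1}(r)$ is only two-dimensional, for each of the finitely many nonzero lattice points $\ell\in\Delta^*_{WP}$ (equivalently, for each of the finitely many rays they span) one computes the polygon $\nabla\cap(h^*)^{-1}(r)$ and the subdivision induced on it by $\Sigma'(\nabla)$, then inspects its rays directly, using the Macaulay2/Polyhedra computation of Proposition~\ref{goodsubdiv}; Proposition~\ref{goodsubdiv} itself trims the bookkeeping, since $h^*(C)$ is a union of cones of $\Sigma'(\Delta^*_{WP})$ for every $C\in\Sigma'(\nabla)$, so only cones $C$ with $r\subseteq h^*(C)$ need to be examined. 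One expects to find that the primitive generators arising in this way are among the vertices $(2,0,0,0,-1)$, $(0,2,0,0,-1)$, $(0,0,2,0,-1)$, $(-1,-1,-1,3,0)$ of $\nabla$ — the preimages, recorded in Proposition~\ref{goodsubdiv}, of the extra rays $v_9,v_8,v_7,v_3$ of $\Sigma'(\Delta^*_{WP})$ listed in Table~\ref{sigma1}. I expect this last, computational, step to be the main obstacle: a generic two-plane slice of a fan need not produce rays over lattice points of the underlying polytope, so the statement holds here only because of the particular arithmetic of $\nabla$ and $h^*$, and some explicit finite verification seems unavoidable.
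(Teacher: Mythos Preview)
Your proposal is correct and, in its final step, coincides with the paper's own proof: the paper simply says the lemma is verified by direct Macaulay2/Polyhedra computation, reducing to the check that for each nonzero lattice point $\ell\in\Delta^*_{WP}$ the intersection of the half-plane $(h^*)^{-1}(r)$ with every face $f$ of $\nabla$ appearing in $\Sigma'(\nabla)$ is empty, a lattice point of $\nabla$, or a segment with lattice endpoints in $\nabla$.

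The difference is one of exposition. The paper absorbs the entire argument into the computer check. You instead separate off the two ``kernel'' rays $\mathbb{R}_{\geq 0}(\pm k)$, $k=(0,0,0,2,-1)$, and treat them conceptually: $\mathbb{R}_{\geq 0}k$ is a ray of $\Sigma'(\nabla)$ over a vertex of $\nabla$, while $\mathbb{R}_{>0}(-k)$ exits $\nabla$ through the relative interior of the facet $\{x_4=-1\}$, which violates condition~3 of Definition~\ref{sigmaprime} and hence is absent from $\Sigma'(\nabla)$. This is a genuine clarification---it makes explicit \emph{why} removing the cones of Definition~\ref{sigmaprime} is exactly what is needed to eliminate the bad kernel direction $(0,0,0,-2,1)\notin\nabla$, a point the paper alludes to before the lemma but does not spell out in the proof. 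For the remaining rays (those mapping onto $r$) you correctly note that no general principle forces them to lie over lattice points of $\nabla$, so a finite case check is unavoidable; this is precisely the paper's computation. Your aside about which specific lattice points should appear is an expectation rather than a claim, and rightly so---different $\ell$ will produce different rays, not only the four you list.
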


\begin{proof} This can again be done by direct computation using Macaulay2's Polyhedra package.  Recalling that every cone in $\Sigma'(\Delta)$ is the cone over $f$ for some face $f \in \nabla$, it suffices to show that the intersection of the two-dimensional half space $(h^*)^{-1}(r)$ with any such $f$ is either empty, a lattice point of $\nabla$, or a line segment whose vertices are both lattice points of $\nabla$.  The Macaulay2 code is available at \url{http://math.ucr.edu/~karl/M2code.html}.
\end{proof}

\begin{prop} \label{prop:res} Let $\widehat{\Sigma}(\Delta^*_{WP})$ be the fan for an MPCP resolution $\widehat{X}(\Delta^*_{WP})$ of $X(\Delta^*_{WP})$.  Suppose that $\widehat{\Sigma}(\Delta^*_{WP})$ is also a subdivision of the fan $\Sigma'(\Delta^*_{WP})$ from Proposition \ref{goodsubdiv}.  Then there is a partial crepant  resolution $\widehat{X}(\Sigma'(\nabla))$ of $X(\Sigma'(\nabla))$, such that $\mathcal{X}^*_{BB} \subseteq \widehat{X}(\Sigma'(\nabla))$ is a family of smooth projective CY varieties, and the toric morphism $h^*: \widehat{X}(\Sigma'(\nabla)) \rightarrow \widehat{X}(\Delta^*_{WP})$ exists.  The morphism $h^*$ acts as a crepant resolution of members of the family $\mathcal{X}^*_S \subseteq \widehat{X}(\Delta^*_{WP})$.
\end{prop}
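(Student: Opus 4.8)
The plan is to build $\widehat{X}(\Sigma'(\nabla))$ by pulling the MPCP fan $\widehat{\Sigma}(\Delta^*_{WP})$ back through $h^*$, using the pullback to refine the subfan $\Sigma'(\nabla)$, and then realizing the refined fan inside a projective MPCP resolution of all of $X(\nabla)$; once the fans are set up, the toric morphism and the crepancy of the resolution come essentially for free. Concretely, I would first form the common refinement $\Sigma''(\nabla)$ of $\Sigma'(\nabla)$ with the pullback fan $(h^*)^{-1}(\widehat{\Sigma}(\Delta^*_{WP}))$, i.e.\ the collection of cones $C \cap (h^*)^{-1}(D)$ with $C \in \Sigma'(\nabla)$ and $D \in \widehat{\Sigma}(\Delta^*_{WP})$. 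Since $\widehat{\Sigma}(\Delta^*_{WP})$ is complete and $\Sigma'(\nabla)$ is strongly convex, these cones assemble into a fan subdividing $\Sigma'(\nabla)$, and by construction $h^*$ sends each of them into a cone of $\widehat{\Sigma}(\Delta^*_{WP})$; Proposition~\ref{goodsubdiv} is what makes this refinement well behaved, since $\widehat{\Sigma}(\Delta^*_{WP})$ refines $\Sigma'(\Delta^*_{WP})$ and $h^*$ is already compatible with the latter. Applying Lemma~\ref{lemma:crepant} to the rays of $\widehat{\Sigma}(\Delta^*_{WP})$, all of which are rays over lattice points of $\Delta^*_{WP}$, together with the observation that the only primitive kernel ray of $h^*$ lying in $\nabla$ is $(0,0,0,2,-1)$, which is already a ray of $\Sigma'(\nabla)$, one checks that $\Sigma''(\nabla)$ introduces no ray outside the lattice points of $\nabla$, so it is a crepant subdivision of $\Sigma'(\nabla)$; it is projective because the pullback under $h^*$ of a strictly convex support function for $\widehat{\Sigma}(\Delta^*_{WP})$, added to a strictly convex support function for $\Sigma'(\nabla)$, is strictly convex on $\Sigma''(\nabla)$.

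Next I would extend $\Sigma''(\nabla)$ to an MPCP subdivision $\widehat{\Sigma}(\nabla)$ of the whole reflexive fan $\Sigma(\nabla)$: over $|\Sigma'(\nabla)|$ (the union of its cones) one continues to star-subdivide $\Sigma''(\nabla)$ at the remaining lattice points of $\nabla$ and then passes to a simplicial refinement introducing no further rays, while over the cones of $\Sigma(\nabla)$ outside $|\Sigma'(\nabla)|$ one takes an ordinary projective MPCP refinement; the two pieces glue along their common boundary because $\Sigma''(\nabla)$ already subdivides $\Sigma'(\nabla)$, and projectivity is arranged by the usual perturbation of support functions. Let $\widehat{X}(\nabla)$ be the resulting projective MPCP resolution, and let $\widehat{X}(\Sigma'(\nabla))$ be the open subvariety attached to the subfan of $\widehat{\Sigma}(\nabla)$ supported on $|\Sigma'(\nabla)|$. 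This subfan is a crepant, simplicial, projective subdivision of $\Sigma'(\nabla)$ refining $\Sigma''(\nabla)$, so $\widehat{X}(\Sigma'(\nabla)) \to X(\Sigma'(\nabla))$ is a partial crepant resolution, and since every one of its cones still maps into a cone of $\widehat{\Sigma}(\Delta^*_{WP})$, the toric morphism $h^* : \widehat{X}(\Sigma'(\nabla)) \to \widehat{X}(\Delta^*_{WP})$ exists.

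It remains to identify the Calabi--Yau family and to check that $h^*$ resolves its degenerate members. By the Proposition showing that a generic complete intersection $V = \{s_1 = s_2 = 0\} \subseteq X(\nabla)$ lies in $X(\Sigma'(\nabla))$, the Batyrev--Borisov complete intersection in $\widehat{X}(\nabla)$ cut out by the pullbacks of $s_1$ and $s_2$ is the preimage of $V$, hence lies in the open set $\widehat{X}(\Sigma'(\nabla))$; since $\widehat{X}(\nabla)$ is a projective MPCP resolution of a five-dimensional reflexive Gorenstein Fano, \cite{BB} shows the generic such complete intersection is a smooth Calabi--Yau threefold, and it is projective as a closed subvariety of $\widehat{X}(\nabla)$. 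This is the family $\mathcal{X}^*_{BB}$ in the parameter $b_6$. Setting $b_6 = a_6/2$ as above, $h^*$ carries a generic member $Z_{BB} \in \mathcal{X}^*_{BB}$ into the corresponding member $Z_S \in \mathcal{X}^*_S$ and restricts on the open torus to the birational map of \cite{fred}; because $Z_{BB}$ is complete this morphism is proper, hence a proper birational morphism onto $Z_S$ from a smooth variety, i.e.\ a resolution of singularities. It is crepant since $K_{Z_{BB}} = 0$ while $Z_S$, being an anticanonical section of the Gorenstein variety $\widehat{X}(\Delta^*_{WP})$, also has trivial canonical sheaf, so that $(h^*)^* K_{Z_S} = 0 = K_{Z_{BB}}$.

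The main obstacle, and the feature that distinguishes this case from the $P(2,4)$ case treated in \cite{fred}, is that $\Sigma'(\nabla)$ cannot be completed: the kernel ray $(0,0,0,-2,1)$ of $h^*$ is not contained in $\nabla$, so $h^*$ is never proper as a morphism of ambient toric varieties, and properness of the resolution has to be recovered from completeness of the Calabi--Yau fibers rather than from the toric morphism itself. The other delicate point is ensuring that the common refinement $\Sigma''(\nabla)$ is simultaneously compatible with $h^*$ over $|\Sigma'(\nabla)|$ and crepant --- that is, that forming it creates no ray outside the lattice points of $\nabla$; this is precisely what Lemma~\ref{lemma:crepant} provides, via the Macaulay2 computation, and once it is in hand the projectivity statements and the extension to a full MPCP are routine.
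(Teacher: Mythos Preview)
Your argument is correct and follows essentially the same route as the paper: form the intersection fan $\Sigma''(\nabla)$ (the paper calls it $\Sigma_{int}$), use Proposition~\ref{goodsubdiv} together with Lemma~\ref{lemma:crepant} to see that no new rays are created outside the lattice points of $\nabla$, then refine maximally by star subdivisions; the toric morphism to $\widehat{X}(\Delta^*_{WP})$ exists by construction, and the restriction to $\mathcal{X}^*_{BB}$ is a crepant resolution of members of $\mathcal{X}^*_S$.

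The only substantive difference is in how projectivity of the Calabi--Yau members is secured. The paper stays entirely inside $|\Sigma'(\nabla)|$, star-subdivides $\Sigma_{int}$ there, and appeals to standard results (together with Proposition~5.8 of \cite{fred}) for smoothness, projectivity, and the resolution property. You instead extend $\Sigma''(\nabla)$ to a global MPCP subdivision $\widehat{\Sigma}(\nabla)$ of all of $\Sigma(\nabla)$ and read off projectivity from the closed embedding $\mathcal{X}^*_{BB} \hookrightarrow \widehat{X}(\nabla)$. Your way makes projectivity more transparent, at the cost of an extra gluing step (matching the subdivision over $|\Sigma'(\nabla)|$ with an MPCP refinement of the remaining cones along their common faces); this gluing is indeed routine via perturbation of support functions, but it is worth saying explicitly that the MPCP refinement outside must be chosen to restrict to the already-fixed triangulation on the shared boundary faces. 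The paper's approach avoids this by never leaving $\Sigma'(\nabla)$, trading the gluing for a citation.
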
 

\begin{proof} Consider the intersection fan $\Sigma_{int}$ which consists of all cones of the form $(h^*)^{-1}(C_1) \cap C_2$, where $C_1$ is a cone in $\widehat{\Sigma}(\Delta^*_{WP})$ and $C_2$ is a cone in $\Sigma'(\nabla)$.  By Proposition \ref{goodsubdiv}, $h^*(C_2)$ can be written as a union of cones in $\Sigma'(\Delta^*_{WP})$.  Since $\widehat{\Sigma}(\Delta^*_{WP})$ is a subdivision of $\Sigma'(\Delta^*_{WP})$, the argument in Proposition 5.6 of \cite{fred} shows that all rays of $(h^*)^{-1}(C_1) \cap C_2$ must be of the form $(h^*)^{-1}(\mathbb{R}_{\geq 0} \ell) \cap C'$ where $\ell$ is a lattice point of $\Delta^*_{WP}$ and $C'$ is some cone in $\Sigma'(\nabla)$.  But by Lemma \ref{lemma:crepant}, this must be the ray over a lattice point in $\nabla$.  

It follows that $\Sigma_{int}$ is the fan for a crepant partial resolution of $\Sigma'(\nabla)$.  The morphism of fans $h^* : \Sigma_{int} \rightarrow \widehat{\Sigma}(\Delta^*_{WP})$ exists by construction, and $\Sigma_{int}$ will be quasiprojective since $\widehat{\Sigma}(\Delta^*_{WP})$ is projective and $\Sigma'(\nabla)$ is quasiprojective. If $\Sigma_{int}$ is not maximally subdivided (i.e., there are non-simplicial cones in $\Sigma_{int}$, or there are nonzero lattice points $\ell \in \nabla$ which are in the support of $\Sigma_{int}$ but $\mathbb{R}_{\geq 0} \ell$ is not a ray of $\Sigma_{int}$) then we can make $\Sigma_{int}$ maximal by repeatedly taking star subdivisions at the appropriate lattice points.  This produces the fan for the needed toric variety $\widehat{X}(\Sigma'(\nabla))$.  Since it comes from a maximal crepant subdivision of $\Sigma'(\nabla)$, standard results guarantee that $\mathcal{X}^*_{BB} \subseteq \widehat{X}(\Sigma'(\nabla))$ will be a family of smooth projective CY threefolds.  

Restricting the toric morphism $h^* : \widehat{X}(\Sigma'(\nabla)) \rightarrow \widehat{X}(\Delta^*_{WP})$  to $\mathcal{X}^*_{BB}$, we get a regular birational morphism $h^*: \mathcal{X}^*_{BB} \rightarrow \mathcal{X}^*_S$.  Because it is a morphism from a family of smooth projective CY threefolds, Proposition 5.8 from \cite{fred} guarantees this must act as a resolution of singularities of members of $\mathcal{X}^*_S$. \end{proof}

\subsection{Singular locus in $\mathcal{X}^*_S$}

For the rest of this section we will analyze how the toric morphism resolves the singularities of a generic member $Z_S$ of $\mathcal{X}^*_S$.  In the proof of Proposition \ref{prop:res} we defined an intersection fan $\Sigma_{int}$ such that the morphism $h^* : X(\Sigma_{int}) \rightarrow \widehat{X}(\Delta^*_{WP})$ exists.  A partially resolved family $\mathcal{Y}^*$, birational to $\mathcal{X}^*_S$, exists in $X(\Sigma_{int})$.  We will show that $h^*$ restricts to an isomorphism between a generic member $Y \in \mathcal{Y}^*$ and its corresponding member $Z_S \in \mathcal{X}^*_S$.  Thus, the family $\mathcal{X}^*_S$ can actually be embedded into $X(\Sigma_{int})$.  As stated in Proposition \ref{prop:res}, further resolving $X(\Sigma_{int})$ to $\widehat{X}(\Sigma'(\nabla))$ will resolve $\mathcal{Y}^*$ to the family $\mathcal{X}^*_{BB}$.

Verifying that $h^*$ acts as an isomorphism between the families $\mathcal{Y}^*$ and $\mathcal{X}^*_S$ requires, at least with our approach, a significant amount of computation in local coordinates.  We will give the details for the most important calculations, which should at least make this claim seem fairly plausible.  The remaining details that need to be checked are all simpler versions of the main calculations in the text.

First we describe the singular locus of generic members $Z_S \in \mathcal{X}^*_S$.

\begin{prop} For generic values of $b_6$, the subvariety of $X(\Delta^*_{WP})$ defined by the anticanonical line bundle section $$-1+X_1(1+b_6 (X_1X_2X_3X_4)^{-1})^2+X_2+X_3+X_4=0$$ intersects all torus orbits of $X(\Delta^*_{WP})$ transversally (meaning the intersection scheme is either empty, or nonsingular and of codimension one in the torus orbit) except possibly those corresponding to the cones over the face $$F_1 = Conv((-1,3,-1,-1),(-1,-1,3,-1),(-1,-1,-1,3))$$ and all of its sub-faces. \end{prop}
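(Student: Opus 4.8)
The plan is to reduce the statement to a face-by-face verification of the standard combinatorial criterion for transversality of an anticanonical hypersurface with the torus orbits of a Gorenstein toric Fano variety. Recall (cf. \cite{Ba}) that if $\Theta$ is a face of $\Delta^*_{WP}$, $T_\Theta \subseteq X(\Delta^*_{WP})$ the corresponding orbit, and $\Theta^\vee \subseteq \Delta_{WP}$ the dual face, then the restriction to $T_\Theta$ of the section $f = \sum_m c_m X^m$ agrees, up to a nowhere-vanishing monomial, with the truncation $f_\Theta := \sum_{m \in \Theta^\vee \cap M} c_m X^m$ regarded as a Laurent polynomial on $T_\Theta \cong (\mathbb{C}^*)^{\dim \Theta^\vee}$; and $Z_S$ meets $T_\Theta$ transversally precisely when $f_\Theta$ is nondegenerate there, i.e.\ $\{f_\Theta = 0\}$ is empty or smooth of codimension one. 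So it suffices to prove nondegeneracy of $f_\Theta$ for every face $\Theta$ with $\Theta \not\subseteq F_1$.

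First I would record the combinatorics. Expanding the defining equation, $Z_S$ is cut out by
\[ -1 + X_1 + X_2 + X_3 + X_4 + 2b_6 (X_2X_3X_4)^{-1} + b_6^2 X_1^{-1}(X_2X_3X_4)^{-2} = 0, \]
whose seven monomials correspond to the lattice points $0,\, f_1,\, f_2,\, f_3,\, f_4,\, p := (0,-1,-1,-1)$ and $v_0 := (-1,-2,-2,-2)$ of $\Delta_{WP}$, with coefficients $c_0 = -1$, $c_{f_i} = 1$, $c_p = 2b_6$, $c_{v_0} = b_6^2$. Thus $\Delta_{WP}$ is the $4$-simplex on $f_1,\dots,f_4,v_0$, its only non-vertex boundary lattice point is $p$, the midpoint of the edge $e := Conv(f_1,v_0)$, and only $c_p,\ c_{v_0}$ depend on $b_6$. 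Using the vertex–facet duality of the two simplices $\Delta_{WP}$ and $\Delta^*_{WP}$ (faces of a simplex being determined by which vertices they omit), one checks that a proper face $\Theta^\vee$ of $\Delta_{WP}$ contains $p$ if and only if it contains all of $e$ (a supporting hyperplane through a relative interior point of a face contains that face), and that this happens exactly when $\Theta \subseteq F_1$. Consequently, for every proper nonempty $\Theta$ with $\Theta \not\subseteq F_1$, the truncation $f_\Theta$ does not involve $c_p$ and involves at most one of $c_{f_1} = 1$ and $c_{v_0} = b_6^2$; moreover $\Theta^\vee$, being a face of a simplex that avoids the unique extra lattice point $p$, is itself a simplex whose only lattice points are its vertices. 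Hence after dividing by a monomial, $f_\Theta = 1 + \sum_i \gamma_i v_i$ with the $v_i$ independent monomial coordinates on $T_\Theta$ and each $\gamma_i \in \{1, b_6^2\}$ nonzero; such an affine-linear Laurent polynomial has smooth, reduced zero locus in the torus, so $Z_S$ meets $T_\Theta$ transversally — in fact for every $b_6 \neq 0$.

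It then remains to treat $\Theta = \emptyset$, i.e.\ the open torus $T \subseteq X(\Delta^*_{WP})$, where $f_\emptyset = f$; this is the one place a genuine genericity argument in $b_6$ enters. Using the factored form $f = -1 + X_1(1 + b_6 t)^2 + X_2 + X_3 + X_4$ with $t = (X_1X_2X_3X_4)^{-1}$, I would compute $\partial f/\partial X_1 = (1 + b_6 t)(1 - b_6 t)$ and $\partial f/\partial X_j = 1 - 2b_6 (X_2X_3X_4)^{-1} X_j^{-1}(1 + b_6 t)$ for $j = 2,3,4$. A common zero of $f$ and all four partials forces $b_6 t = \pm 1$; the value $b_6 t = -1$ makes each $\partial f/\partial X_j$ equal to $1$, a contradiction, so $b_6 t = 1$, whereupon the equations $\partial f/\partial X_j = 0$ give $X_2 = X_3 = X_4$ and the system pins down $X_1,\dots,X_4$ up to roots of unity and leaves a single admissible value of $b_6$ (one finds $b_6 = 1/1024$). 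Hence $Z_S \cap T$ is smooth for generic $b_6$, and combining this with the previous paragraph completes the proof.

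The only real obstacle is the combinatorial bookkeeping in the middle step: matching the vertices of $\Delta^*_{WP}$ with the facets of $\Delta_{WP}$ correctly, and verifying that the truncations $f_\Theta$ never acquire the lattice point $p$ (which carries the special coefficient $2b_6$) unless $\Theta \subseteq F_1$. Everything else — the nondegeneracy of an affine-linear Laurent polynomial with nonzero coefficients over a simplex, and the finiteness of the exceptional set of $b_6$ on the open torus — is routine, and the routine cases can if desired be confirmed with the same computational setup used elsewhere in this section.
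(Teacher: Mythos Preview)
Your argument is correct, but it proceeds quite differently from the paper's.

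The paper's proof is a one-line reduction to Batyrev's general $\Delta$-regularity theorem. Every face $\Theta$ of $\Delta^*_{WP}$ not contained in $F_1$ must contain one of the two vertices $(-1,-1,-1,-1)$ or $(7,-1,-1,-1)$; on the corresponding divisors the monomial $(X_2X_3X_4)^{-1}$ vanishes, so on $T_\Theta$ the restriction of the defining section is unchanged if one replaces the coefficient $2b_6$ of that monomial by an arbitrary $a_5$. For generic $a_5$ the resulting section is, up to a toric automorphism of $X(\Delta^*_{WP})$, a generic anticanonical section, and Proposition~3.1.3 of \cite{Ba} then gives transversality on $T_\Theta$ at once. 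No face-by-face bookkeeping is needed.

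Your route is the direct combinatorial one: you identify $p=(0,-1,-1,-1)$ as the unique non-vertex boundary lattice point of the simplex $\Delta_{WP}$, observe that it lies on the edge $Conv(f_1,v_0)$ dual to $F_1$, and conclude that every relevant truncation $f_\Theta$ is supported on a subset of the vertices of a simplex and hence (after dividing by one monomial) is affine-linear with nonzero coefficients, so visibly nondegenerate. This is more hands-on but also more elementary, and it yields a mildly sharper statement: transversality on those orbits holds for \emph{every} $b_6\neq 0$, not merely for generic $b_6$.

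One small remark: your separate treatment of the open torus is not required by the proposition as stated. The empty face is a sub-face of $F_1$, so the dense orbit is already in the ``except possibly'' clause, and the paper's proof explicitly sets it aside. Your Jacobian computation there is nonetheless correct (the phrase ``up to roots of unity'' is a bit loose --- the system in fact has a unique solution), and it gives the bonus information that the open-torus part is smooth for all $b_6\neq 1/1024$.
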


\begin{proof} Excluding the open torus and torus orbits corresponding to cones over faces of $F_1$,
a torus orbit of $X(\Delta^*_{WP})$ must correspond to a cone over a face of $\Delta^*_{WP}$ which includes one of the vertices $(-1,-1,-1,-1)$ or $(7,-1,-1,-1)$.  Because the global section $(X_2 X_3 X_4)^{-1}$ of the line bundle vanishes on both of the toric divisors corresponding to the rays over these vertices, the intersection scheme with such a torus orbit will be the same as the intersection with the subvariety defined by the equation
$$-1+X_1+a_5 (X_2 X_3 X_4)^{-1}+b^2_6 X^{-1}_1 (X_2X_3X_4)^{-2}+X_2+X_3+X_4=0$$
for any value of $a_5$, as can be seen by expanding out the above equation.  Then by picking a general value of $a_5$, we can make the subvariety defined by this equation isomorphic (via a toric automorphism of $X(\Delta^*_{WP})$) to the subvariety defined by a generic section of the line bundle.  The result then follows from Proposition 3.1.3 of \cite{Ba}.
\end{proof}

By \cite{Ba}, Corollary 3.1.7, it now follows that after an MPCP resolution of $X(\Delta^*_{WP})$ to a new toric variety $\widehat{X}(\Delta^*_{WP})$, all singularities of $Z_S$ must be contained in the affine charts corresponding to cones contained in the cone over $F_1$.

\begin{figure}[t] 
  \begin{center} \includegraphics[scale=0.5]{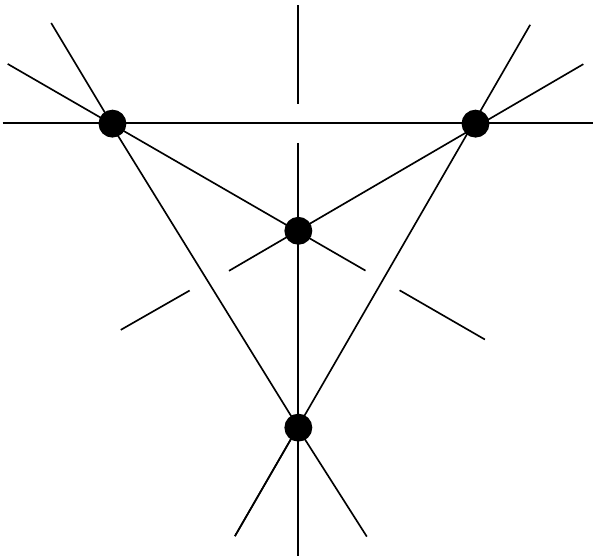} \end{center}
   \caption{\label{singularities} Configuration of singular locus in generic members of the family $\mathcal{X}^*_S$.  Each line represents a $\mathbb{P}^1$ of $A_1$ singularities, and the dots represent the intersection points $P_1, \dots, P_4$ described in the text. } 
\end{figure}

According to Proposition \ref{prop:res}, to be able to construct a toric morphism $h^* : \widehat{X}(\Sigma'(\nabla)) \rightarrow \widehat{X}(\Delta^*_{WP})$, the resolution $\widehat{X}(\Delta^*_{WP})$ must come from a refinement of $\Sigma'(\Delta^*_{WP})$.  Intersecting the cones of $\Sigma'(\Delta^*_{WP})$ with the face $F_1$, we get the configuration shown in Figure \ref{subdivisionwp1}.  Thus we can choose a resolution $\widehat{X}(\Delta^*_{WP})$ which refines $F_1$ as shown in Figure \ref{subdivchoice}.  

\begin{figure}[t] 
  \begin{center} \includegraphics[scale=0.5]{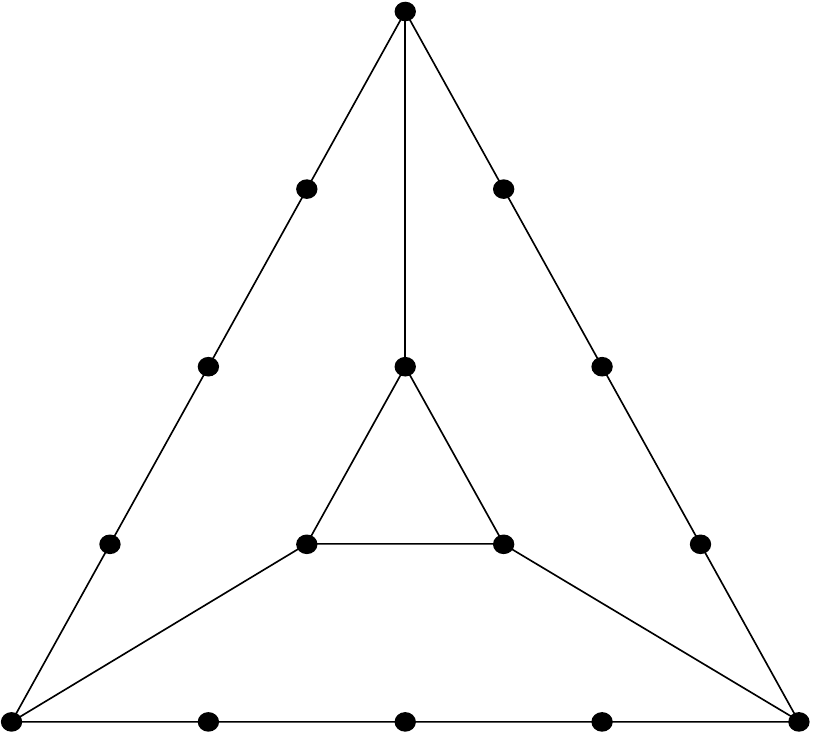} \end{center}
   \caption{\label{subdivisionwp1} Subdivision of the face $F_1 \subseteq \Delta^*_{WP}$ in the fan $\Sigma'(\Delta^*_{WP})$. } 
\end{figure}

\begin{figure}[t] 
  \begin{center} \includegraphics[scale=0.5]{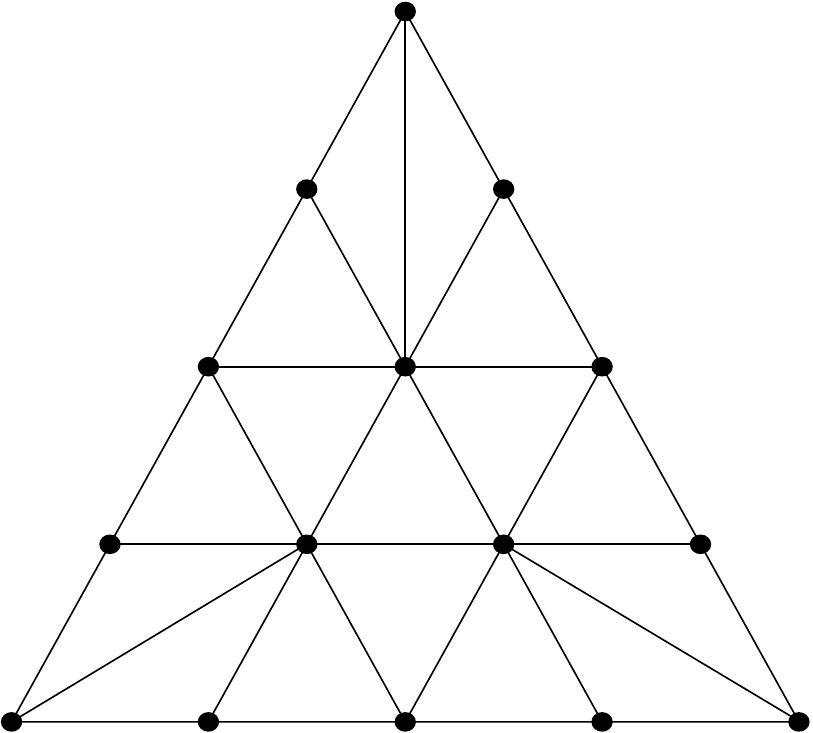} \end{center}
   \caption{\label{subdivchoice} Choice of maximal lattice subdivision of the face $F_1$. } 
\end{figure}

Examining the affine chart $W_1 \subseteq X'(\Delta^*_{WP})$, which corresponds to the central triangle contained in $F_1$ (see Table \ref{tab:y1}), we see that $Z_S \cap W_1$ is defined by the equation  
\[
-D_1D_2D_3D^{-1}_4+(1+b_6 D^{-1}_4)^2+D^2_1D_2D_3D^{-1}_4+   D_1D^2_2D_3D^{-1}_4+D_1D_2D^2_3D^{-1}_4 = 0
\]
or after rearranging and factoring,
\[
(1+b_6 D^{-1}_4)^2 = D_1D_2D_3D^{-1}_4(1-D_1-D_2-D_3) \]
This subvariety will be singular whenever we have that both $1+b_6 D^{-1}_4 = 0$ (or $D_4 = -b_6$) and any two of $D_1$, $D_2$, $D_3$, and $1-D_1-D_2-D_3$ are zero.  Thus, we get six lines of $A_1$ singularities, each corresponding to a choice of two of the above linear polynomials.  There are four points $P_1, \dots, P_4$ where three of the lines intersect, corresponding to setting three of the linear polynomials equal to zero (see Figure \ref{singularities}).  These four points are toric singularities of the type defined by the equation $x^2=yzw$ in $\mathbb{C}^4$ at $x=y=z=w=0$.  A somewhat tedious, but straightforward, analysis shows that these six $\mathbb{P}^1$s constitute the entire singular locus of $Z_S$.  

\begin{table}
\begin{tabular}{ |l |m{9cm} | }
  \hline
  Affine Chart & $$W_1$$ \\
  \hline
  Convex Cone & {\begin{gather*} \mathbb{R}_{\geq 0} (-1,1,0,0) + \mathbb{R}_{\geq 0}(-1,0,1,0)+ \\ \mathbb{R}_{\geq 0}(-1,0,0,1) \subseteq N_\mathbb{R} \end{gather*}} \\
  \hline
  Coordinate Ring & $$R_1 = \mathbb{C}[D_1, D_2, D_3, D^{\pm 1}_4]$$ {\begin{align*} D_1 &= z^{(0,1,0,0)} \\
 D_2 &= z^{(0,0,1,0)} \\ 
 D_3 &= z^{(0,0,0,1)} \\
 D_4 &= z^{(1,1,1,1)}
 \end{align*}} \\
  \hline
  Ideal of $Z_{S} \cap W_1$  & 
  {\begin{gather*} I_1 = (-D_1D_2D_3D^{-1}_4+(1+b_6 D^{-1}_4)^2+ \\ D^2_1D_2D_3D^{-1}_4+   D_1D^2_2D_3D^{-1}_4+D_1D_2D^2_3D^{-1}_4) \subseteq R_1 \end{gather*}} \\
  \hline
\end{tabular}
\caption{Data for affine chart $W_1$}
\label{tab:y1}
\end{table}

\begin{table}
\begin{tabular}{ |l |m{9cm} | }
  \hline
  Affine Chart & $$U_1$$ \\
  \hline
  Convex Cone & {\begin{gather*} \mathbb{R}_{\geq 0} (2,0,0,0,-1) + \mathbb{R}_{\geq 0}(0,2,0,0,-1)+ \\ \mathbb{R}_{\geq 0}(0,0,2,0,-1) \subseteq N'_\mathbb{R} \end{gather*}} \\
  \hline
  Coordinate Ring & $$R_2 = \mathbb{C}[B_1, B_2, B_3, B_4, B^{\pm 1}_5, B^{\pm 1}_6]/(B_1B_2B_3-B^2_4)$$ {\begin{align*} B_1 &= z^{(-1,-1,0,0,-2)} \\
 B_2 &= z^{(-1.0,-1,0,-2)} \\ 
 B_3 &= z^{(0,-1,-1,0,-2)} \\
 B_4 &= z^{(-1,-1,-1,0,-3)} \\
 B_5 &= z^{(0,0,0,1,0)} \\
 B_6 &= z^{(-1,-1,-1,0,-2)}
 \end{align*}} \\
  \hline
  Ideal of $Y \cap U_1$  & 
  {\begin{gather*} I_2 = (1+b_6B^{-1}_5B_6-B_4B^{-1}_6, \\ B_1B^{-1}_6+B_2B^{-1}_6+B_3B^{-1}_6+B_5-1) \subseteq R_2 \end{gather*}} \\
  \hline
\end{tabular}
\caption{Data for affine chart $U_1$}
\label{tab:z1}
\end{table}

\begin{table}
\begin{tabular}{ |l |m{9cm} | }
  \hline
  Affine Chart & $$U_2$$ \\
  \hline
  Convex Cone & {\begin{gather*} \mathbb{R}_{\geq 0} (2,0,0,0,-1) + \mathbb{R}_{\geq 0}(0,2,0,0,-1)+ \\ \mathbb{R}_{\geq 0}(0,0,0,2,-1) \subseteq N'_\mathbb{R} \end{gather*}} \\
  \hline
  Coordinate Ring & $$R_3 = \mathbb{C}[C_1, C_2, C_3, C_4, C^{\pm 1}_5, C^{\pm 1}_6]/(C_1C_2C_3-C^2_4)$$ {\begin{align*} C_1 &= z^{(-1,-1,0,0,-2)} \\
 C_2 &= z^{(-1.0,0,-1,-2)} \\ 
 C_3 &= z^{(0,-1,0,-1,-2)} \\
 C_4 &= z^{(-1,-1,0,-1,-3)} \\
 C_5 &= z^{(0,0,1,0,0)} \\
 C_6 &= z^{(-1,-1,0,-1,-2)}
 \end{align*}} \\
  \hline
  Ideal of $Y \cap U_2$  & 
  {\begin{gather*} I_3 = (1+b_6C^{-1}_5C_6-C_4C^{-1}_6, \\ C_1C^{-1}_6+C_2C^{-1}_6+C_3C^{-1}_6+C_5-1) \subseteq R_2 \end{gather*}} \\
  \hline
\end{tabular}
\caption{Data for affine chart $U_2$}
\label{tab:z2}
\end{table}

\subsection{Morphism from $U_1$ to $W_1$}

The morphism $h^*$ maps the affine chart $U_1 \subseteq X(\Sigma'(\nabla))$ to the affine chart $W_1 \subseteq X(\Sigma_i)$ (see Tables \ref{tab:y1} and \ref{tab:z1}), and is associated to a ring morphism $r_3: R_1 \rightarrow R_2$ defined by 
\begin{align*}
r_3(D_1) &= B_1 B^{-1}_6 \\
r_3(D_2) &= B_2 B^{-1}_6 \\
r_3(D_3) &= B_3 B^{-1}_6 \\
r_3(D_4) &= B_5 B^{-1}_6.
\end{align*}
This morphism descends to a morphism $r_3 : R_1/I_1 \rightarrow R_2/I_2$ which is an isomorphism after localizing by removing the set $1-D_1-D_2-D_3-D_4 = 0$ in $W_1$, so that we have
\[
r_3: (R_1/I_1)_{(1-D_1-D_2-D_3)} \xrightarrow{\sim} R_2/I_2. \]
The inverse is given by 
\begin{align*}
B_1 &\rightarrow D_1(1-D_1-D_2-D_3)D^{-1}_4 \\
B_2 &\rightarrow D_2(1-D_1-D_2-D_3)D^{-1}_4 \\
B_3 &\rightarrow D_3(1-D_1-D_2-D_3)D^{-1}_4 \\
B_4 &\rightarrow (1-D_1-D_2-D_3)D^{-1}_4(1+b_6 D^{-1}_4) \\
B_5 &\rightarrow 1-D_1-D_2-D_3 \\
B_6 &\rightarrow (1-D_1-D_2-D_3)D^{-1}_4.
\end{align*}
This shows that the singularity of $Z_S$ at $D_1 = D_2 = D_3 =0$ is embedded as a singularity acquired from the toric singularities of $U_1$ in $Y$.  This singular point at $D_1 = D_2 = D_3 = 0$ is one of the four points $P_i$, which we will call $P_1$.  Thus, we have proven our earlier assertion for $P_1$.  For the other three points, we need to look on different charts.

\subsection{Morphism from $U_2$ to $W_1$}
The affine chart $U_2 \subseteq X(\Sigma'(\nabla))$ is defined in Table \ref{tab:z2}.  Like $U_1$, $h^*$ maps $U_2$ to the chart $W_1 \subseteq X(\Sigma_i)$.  
This morphism is associated to a ring morphism $r_4 : R_1 \rightarrow R_3$ defined by 
\begin{align*}
r_4(D_1) &= C_5 \\
r_4(D_2) &= C_2 C^{-1}_6 \\
r_4(D_3) &= C_3 C^{-1}_6 \\
r_4(D_4) &= C_5 C^{-1}_6.
\end{align*}
As in the previous case, this map descends to a map $r_4 : R_1/I_1 \rightarrow R_3/I_3$, and becomes an isomorphism after localizing by removing $D_1 = 0$ in $W_1$, so that 
\[ r_4: (R_1/I_1)_{(D_1)} \xrightarrow{\sim} R_3/I_3. \]
The inverse is given by
\begin{align*}
C_1 &\rightarrow  D_1 D^{-1}_4 (1-D_1-D_2-D_3) \\
C_2 &\rightarrow D_1 D_2 D^{-1}_4 \\
C_3 &\rightarrow D_1 D_3 D^{-1}_4 \\
C_4 &\rightarrow D_1 D^{-1}_4(1+b_6 D^{-1}_4) \\
C_5 &\rightarrow D_1 \\
C_6 &\rightarrow D_1 D^{-1}_4.
\end{align*}
This shows, similar to the previous case, that the singularity of $Z_S$ at $D_2 = D_3 = 1-D_1-D_2-D_3 = 0$ is embedded into $Y$ as a singularity acquired from toric singularities of $U_2$. This is another of the points $P_i$, which we will call $P_2$, so we have now proven the assertion for $P_2$ and $P_1$.

For the remaining two of the $P_i$,  we can use a symmetry argument.  If $e_1$, \dots, $e_5$ is the standard basis of $N'_\mathbb{R}$, then consider any linear map $L : N'_\mathbb{R} \rightarrow N'_\mathbb{R}$ which permutes $e_1$, $e_2$, and $e_3$ and leaves $e_4$ and $e_5$ fixed.  Because $L$ leaves fixed the kernel of $h^*$, it will descend to a map $L' : N_\mathbb{R} \rightarrow N_\mathbb{R}$.  Then we have that $L' \circ h^* = h^* \circ L$.  The map $L$ will induce a toric morphism of $X(\Sigma_{int})$ which takes a member of $\mathcal{Y}^*$ to some other member of $\mathcal{Y}^*$.  With the appropriate choice of $L$, the two other points $P_3$ and $P_4$ can be mapped to singularities in $U_2$, like $P_2$, and we can use the argument for $P_2$.  

The open affine subsets of $X(\Sigma_{int})$ which contain embeddings of neighborhoods of $P_1, \dots, P_4$ correspond to the cones over the two-dimensional faces of the three-dimensional polytope
\[
\nabla'_2 = Conv((2,0,0,0,-1),(0,2,0,0,-1),(0,0,2,0,-1),(0,0,0,2,-1))
\]
(which is a face of $\nabla$).  There are four such cones, and the corresponding open affines contain one each of the points $P_1, \dots, P_4$.  Each two-dimensional face of $\nabla'_2$ is a triangle containing six lattice points.  A maximal crepant subdivision of each face will resolve the singularities of the CY family in neighborhoods of each of $P_1, \dots, P_4$.

This establishes that $h^* |_{Y}$ is an isomorphism onto a neighborhood of each point $P_1, \dots, P_4 \in Z_S$.  However, to complete the proof that $h^*$ is an isomorphism between $Y$ and $Z_S$, we still need to show that $h^*$ is an isomorphism onto the six ``points at infinity'' in the six $\mathbb{P}^1$s making up the singular locus of $Z_S$ (as shown in Figure \ref{singularities}).  

To cover the points at infinity, we must look at six other affine charts given by three-dimensional cones of $X(\Sigma_{int})$.  These are the cones over the following triangles in $N'_\mathbb{R}$:
\begin{gather*}
Conv((2,0,0,0,-1),(0,2,0,0,-1),(1,1,-1,-1,0)) \\
Conv((2,0,0,0,-1),(0,0,2,0,-1),(1,-1,1,-1,0)) \\
Conv((0,2,0,0,-1),(0,0,2,0,-1),(-1,1,1,-1,0)) \\
Conv((2,0,0,0,-1),(0,0,0,2,-1),(3,-1,-1,-1,0)) \\
Conv((0,2,0,0,-1),(0,0,0,2,-1),(-1,3,-1,-1,0)) \\
Conv((0,0,2,0,-1),(0,0,0,2,-1),(-1,-1,3,-1,0))
\end{gather*}
Each of the six charts maps onto exactly one point at infinity.  Similar to the other computations, it can be shown that $h^*|_Y$ is an isomorphism onto its image on each of these charts, establishing that $h^*$ is an isomorphism between $Y$ and $Z_S$.

\subsection{Subdividing the fan $\Sigma_{int}$}
As a final remark, we discuss the difference between the fan $\Sigma_{int}$ and the fan for the variety $\widehat{X}(\Sigma'(\nabla))$.  The only three dimensional cones in the fan $\Sigma_{int}$ which are not maximally subdivided (meaning that there are nonzero lattice points $\ell \in \nabla$ in the cone such that $\mathbb{R}_{\geq 0} \ell$ is not a ray of $\Sigma_{int}$) are exactly the cones over the two-faces of $\nabla'_2$, and the cones over the six triangles in the above list.  Members of $\mathcal{Y}^*$ are defined by generic line bundle sections.  Thus, standard results about $\Delta$-regularity from \cite{Ba, BB} say that these cones represent the only affine charts where members of the family $\mathcal{Y}^*$ could have singularities.  Furthermore, the singularities occur as transverse intersections with the singular locus of $X(\Sigma_{int})$.  So $h^*$ embeds the family $\mathcal{X}^*_S$ into $X(\Sigma_{int})$ as the family $\mathcal{Y}^*$ which has singularities acquired from the toric singularities of $X(\Sigma_{int})$.  Torically resolving $X(\Sigma_{int})$ to $\widehat{X}(\Sigma'(\nabla))$ will resolve members of $\mathcal{X}^*_S$ to smooth members of $\mathcal{X}^*_{BB}$.

\section{Doran and Harder's construction}

In \cite{dh}, Doran and Harder described a very general method for producing the mirror birational map to a toric degeneration.  This construction can be viewed either in the context of mirror symmetry between a Landau-Ginzburg model and a Fano variety, or between families of CY varieties obtained from the Batyrev-Borisov construction.  Section 4.4 of their paper discusses the application to geometric transitions of CY varieties.  As they note, their results only guarantee the existence of a birational morphism between families, not a birational contraction, which was the focus of Sections 2-3 of this paper.  However, their construction provides a general framework for understanding the birational morphisms from \cite{fred}, which previously had to be proven with a case-by-case analysis.  We also get insights in the relationship between the two singular families $\mathcal{X}^*_S$ and $\mathcal{X}^*_C$.

The construction begins with the choice of a toric variety and a nef partition.  Given the nef partition, \cite{dh} then defines an ``amenable collection of vectors subordinate to the nef partition'' (Definition 2.2), which allows a toric degeneration to be defined.

For both of the cases studied in this paper, the toric variety is $\mathbb{P}^5$. We consider $\mathbb{P}^5$ as being defined by the fan of cones over faces of the reflexive polytope $\Delta = Conv(e_1, \dots, e_5, -e_1-e_2-e_3-e_4-e_5)$ where $e_1, \dots, e_5$ is the standard basis of $M'_\mathbb{R}$.  The relevant nef partition is $\{ \varphi_1, \varphi_2 \}$, where $\varphi_1, \ \varphi_2 : M'_\mathbb{R} \rightarrow \mathbb{R}$ are integral lower convex functions which are piecewise linear on $\Sigma(\Delta)$.  The vertices of $\Delta$ on which $\varphi_1$ and $\varphi_2$ are nonzero are:
\begin{gather*}
\varphi_1(e_5) = \varphi_1(-e_1-\cdots-e_5) = 1 \\
\varphi_2(e_1) = \varphi_2(e_2) = \varphi_2(e_3) = \varphi_2(e_4) = 1
\end{gather*}
Thus, the line bundles associated to $\varphi_1$ and $\varphi_2$ are $\mathcal{O}_{\mathbb{P}^5}(2)$ and $\mathcal{O}_{\mathbb{P}^5}(4)$, respectively.  The sets $E_i$ for $i = 1, 2$ are defined as the sets of vertices $u \in \Delta$ for which $\varphi_i(u) = 1$.

In this case, an amenable collection of vectors is just a single vector $v_1 \in N'$, which must satisfy $\langle v_1, E_1 \rangle = -1$, and $\langle v_1, E_2 \rangle \geq 0$.  Notice that $0$ and $v_1$ are both elements of $Newt(\varphi_1)$, so $z^{v_1}$ and $z^0$ can be considered as global sections of the associated line bundle $\mathcal{O}_{\mathbb{P}^5}(2)$. According to Doran and Harder's construction, we then let $s_1$ degenerate to the binomial $z^{v_1}-z^0$.  

Up to isomorphism, the possible choices for $v_1$ in this case are $2f_1-f_5$ and $f_1+f_2-f_5$ where $f_1, \dots, f_5$ is the dual basis to $e_1, \dots, e_5$.  These correspond to the toric varieties in $\mathbb{P}^5$ defined in homogeneous coordinates by $z^2_0 = z_4 z_5$ and $z_0 z_1 = z_4 z_5$.  The first equation defines an embedding of weighted projective space $\mathbb{P}^{(1,1,2,2,2)}$, and the second defines an embedding of the toric variety $P(2,4)$ which is a degeneration of the Grassmannian $G(2,4)$.  $P(2,4)$ and $\mathbb{P}^{(1,1,2,2,2)}$ are both Gorenstein toric Fano varieties coming from fans $\Sigma(\Delta_{P(2,4)})$ and $\Sigma(\Delta_{WP})$, where $\Delta_{P(2,4)}$ and $\Delta_{WP}$ are reflexive polytopes.

The equations $s_1 = s_2 = 0$ in $\mathbb{P}^5$ will define a family $\mathcal{X}_{(2,4)}$ of smooth CY threefolds.  If we let $s_1$ degenerate to either of the quadratic binomials above, we get degenerations of the CY family to families of singular CY hypersurfaces in $\mathbb{P}^{(1,1,2,2,2)}$ or $P(2,4)$.  The mirrors to these degenerations, which should be birational contractions by \cite{morrison}, are what was studied in this paper.  

One of the main results of \cite{dh} is that members of the Batyrev-Borisov mirror family to $\mathcal{X}_{(2,4)}$, $\mathcal{X}^*_{BB}$, are birational to hypersurfaces in Spec $\mathbb{C}[N] \cong (\mathbb{C}^*)^4$, defined by Laurent polynomials $q_1$ and $q_2$ whose Newton polytopes are respectively $\Delta_{P(2,4)}$ and $\Delta_{WP}$.  These hypersurfaces will be open subsets of members of the degenerate families $\mathcal{X}^*_C$ and $\mathcal{X}^*_S$.  This was previously shown in \cite{fred}, but using a case-specific analysis rather than a general method.

Another result from \cite{dh} says that $q_1$ and $q_2$ are ``mutation equivalent'', meaning there is a birational morphism $\phi: (\mathbb{C}^*)^4 \rightarrow (\mathbb{C}^*)^4$ such that $\phi^*(q_1) = q_2$, and $\phi$ preserves the torus-invariant form $$\frac{dx}{x} \frac{dy}{y} \frac{dz}{z} \frac{dw}{w}$$ on $(\mathbb{C}^*)^4$ (where $x$, $y$, $z$, $w$ are coordinates on $(\mathbb{C}^*)^4$).  Using the language of \cite{acgk}, $\phi$ is an ``algebraic mutation''.  There is also an associated ``combinatorial mutation'' between the reflexive polytopes $\Delta_{P(2,4)}$ and $\Delta_{WP}$ which can be defined purely in terms of convex geometry; see \cite{acgk}, section 3.
\bibliography{myrefs}
\bibliographystyle{plain}
\end{document}